\newtheorem{thm}{\hskip\parindent Theorem}[section]
\begin{document}

\title{Polynomial vector fields in $\mathbb{C}^\infty$ determining differentiation of hyperelliptic functions of~any~genus}
\author{E.\,Yu.~Bunkova}
\address{Steklov Mathematical Institute of Russian Academy of Sciences, Moscow, Russia}
\email{bunkova@mi-ras.ru}

\begin{abstract}
In this work we give direct proofs of two theorems concerning explicitly defined polynomial vector fields connected to differentiation of hyperelliptic functions of~any~genus. We prove that the operators determining the fields commute, and we show that each of them annulates polynomials defined in terms of generating functions in $\mathbb{C}^\infty$.
\end{abstract}
\maketitle

\section{Introduction} \label{S0}

In the work \cite{BB24} polynomial dynamical systems with operators $\mathcal{D}_{k}$ in the infinite-dimensional complex space $\mathbb{C}^{\infty}$ are defined. The indices $k$ of the operators $\mathcal{D}_{k}$ are odd natural numbers.

The complex space $\mathbb{C}^{\infty}$ has coordinates $b = (b_{1,2j-1}, b_{2,2j-1}, b_{3,2j-1} $ for $ j \in \mathbb{N})$ and a~direct limit topology induced from $\mathbb{C}^{\infty} = \lim_{\rightarrow} \mathbb{C}^{3g}$ for complex spaces $\mathbb{C}^{3g}$ with coordinates $b^g = (b_{1,2j-1}, b_{2,2j-1}, b_{3,2j-1}$ for $j = 1, \ldots, g)$ and $g \in \mathbb{N}$. One can find the details in \S 4 in~\cite{BB24}. 

The operators $\mathcal{D}_{k}$ define polynomial vector fields in $\mathbb{C}^{\infty}$ by the formulas (40)--(42) in~\cite{BB24}, namely for odd natural numbers $j$ and $k$:
\begin{align}
\mathcal{D}_1(b_{1, j}) &= b_{2, j}, \label{D1} &
\mathcal{D}_1(b_{2, j}) &= b_{3, j}, &
\mathcal{D}_1(b_{3, j}) &= 4 (2 b_{1, 1} b_{2, j} + b_{2, 1} b_{1, j} + b_{2, j+2}),
\end{align}
\begin{align}
\mathcal{D}_k(b_{1, j}) &=
b_{2, j+k-1} + \sum_{s=1}^{(k-1)/2} b_{2, 2s-1} b_{1, j+k-2s-1} - \sum_{s=1}^{(k-1)/2}  b_{1, 2s-1} b_{2, j+k-2s-1},
\label{D2}
\end{align}
\begin{align}
\mathcal{D}_k(b_{2, j}) &= \mathcal{D}_1 (\mathcal{D}_k(b_{1, j})), &
\mathcal{D}_k(b_{3, j}) &= \mathcal{D}_1 (\mathcal{D}_k(b_{2,j})). \label{D3}
\end{align}

The corresponding to $\mathcal{D}_{k}$ operators in $\mathbb{C}^{3g}$ give differentiations of genus $g$ hyperelliptic functions with respect to variables in the fiber of the universal bundle of Jacobians of~genus~$g$ hyperelliptic curves, see \cite{BB24} for the correspondence and~\cite{BBN24} and \cite{BB23} for the definitions and constructions.

Section \ref{S1} of this work is devoted to Theorem \ref{comthm} that states that all the operators $\mathcal{D}_{k}$ commute. The result of this theorem is directly related to Corollary 4.4 in~\cite{BB24}. Here we give a proof of Theorem \ref{comthm} that uses only the explicit formulas \eqref{D1}--\eqref{D3}.

In Theorem 6.7 and eq. (54) of the work \cite{BB22} one can find relations that give an expression of some parameters $\lambda_{2j+2}$ where $j \in \mathbb{N}$ as polynomials in the coordinates $b$. The corresponding relations in $\mathbb{C}^{3g}$ are given in Theorem 4.2 in \cite{BS} and in this case the parameters $\lambda_{2j+2}$ for $j \leqslant 2g$ are parameters of a hyperelliptic curve, see Lemma 5.3 in~\cite{BB22}. The relations are the following:

\begin{equation} \label{L1}
 4 {\bf m}(\xi) = {\bf b}_2(\xi)^2 + 2 {\bf b}_3(\xi) (1 - {\bf b}_1(\xi)) + 4 (\xi^{-1} + 2 b_{1,1}) (1 - {\bf b}_1(\xi))^2, 
\end{equation}
where 
\begin{align*}
{\bf b}_i(\xi) &= \sum_{j=1}^\infty b_{i,2j-1} \xi^j & & \text{for} &  i &= 1,2,3, & & \text{and} &
{\bf m}(\xi) &= \xi^{-1} + \sum_{j=1}^\infty \lambda_{2j+2} \xi^j,
\end{align*}
and $\xi$ is a free parameter, so the equation \eqref{L1} gives relations expressing the parameters~$\lambda_{2j+2}$ in its left hand side as polynomials of the coordinates $b$ in its right hand side, see \cite{BB22} for more details.

Section \ref{S2} of this work is devoted to Theorem \ref{invthm} that states that we have $\mathcal{D}_{k}(\lambda_{2j+2}) = 0$ where the $\lambda_{2j+2}$ are polynomials in the coordinates $b$ defined by the relations \eqref{L1}. Therefore these polynomials give integrals of the system \eqref{D1}--\eqref{D3}.  The result of this theorem is directly related to Corollary 4.5 in~\cite{BB24}. Here we give a proof of Theorem \ref{invthm} that uses only the explicit formulas \eqref{D1}--\eqref{D3} and the relations \eqref{L1}.

\vfill

\eject

\section{The operators $\mathcal{D}_{k}$ commute} \label{S1}

From the relations \eqref{D3} using the relations \eqref{D1}--\eqref{D2} we get the explicit formulas
\begin{align}
\mathcal{D}_k(b_{2,j}) &=
b_{3,j+k-1} + \sum_{s=1}^{(k-1)/2} b_{3,2s-1} b_{1, j+k-2s-1} - \sum_{s=1}^{(k-1)/2} b_{1,2s-1} b_{3,j+k-2s-1},
\label{D4}
\\
\mathcal{D}_k(b_{3,j}) &= 
4 (2 b_{1,1} b_{2,j+k-1} + b_{2,1} b_{1,j+k-1} + b_{2,j+k+1}) + \nonumber \\
& \qquad + 4 \sum_{s=1}^{(k-1)/2} (2 b_{1, 1} b_{2, 2s-1} + b_{2, 1} b_{1, 2s-1} + b_{2, 2s+1}) b_{1, j+k-2s-1} + \nonumber \\ & \qquad  + \sum_{s=1}^{(k-1)/2} b_{3,2s-1} b_{2,j+k-2s-1} - \sum_{s=1}^{(k-1)/2} b_{2,2s-1} b_{3,j+k-2s-1} - \nonumber \\
& \qquad - 4 \sum_{s=1}^{(k-1)/2} b_{1,2s-1} (2 b_{1, 1} b_{2, j+k-2s-1} + b_{2, 1} b_{1, j+k-2s-1} + b_{2, j+k-2s+1}).
\label{D5}
\end{align}

\begin{thm} \label{comthm}
For odd natural numbers $k,l$ we have
\begin{align*}
[\mathcal{D}_k,\mathcal{D}_l] &= 0,
\end{align*}
where the brackets denote the commutator of operators.
\end{thm}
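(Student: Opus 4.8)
\section*{Proof proposal}

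The plan is to use that the commutator of two derivations of the polynomial algebra on $\mathbb{C}^\infty$ is again a derivation, so that $[\mathcal{D}_k,\mathcal{D}_l]=0$ is equivalent to $[\mathcal{D}_k,\mathcal{D}_l](b_{i,j})=0$ on each coordinate $b_{i,j}$, $i\in\{1,2,3\}$ and $j$ odd. The organizing tool is \eqref{D3}, read as operator identities: $\mathcal{D}_m(b_{2,j})=\mathcal{D}_1\mathcal{D}_m(b_{1,j})$ and $\mathcal{D}_m(b_{3,j})=\mathcal{D}_1\mathcal{D}_m(b_{2,j})=\mathcal{D}_1^2\mathcal{D}_m(b_{1,j})$ for every odd $m$; taking $m=1$ and using \eqref{D1} gives in particular the polynomial identities $b_{2,j}=\mathcal{D}_1(b_{1,j})$ and $b_{3,j}=\mathcal{D}_1^2(b_{1,j})$.

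I would first settle the case $l=1$, i.e. $[\mathcal{D}_k,\mathcal{D}_1]=0$ for all odd $k$. On $b_{1,j}$ and $b_{2,j}$ this is immediate from \eqref{D1} and \eqref{D3}: $\mathcal{D}_k\mathcal{D}_1(b_{1,j})=\mathcal{D}_k(b_{2,j})=\mathcal{D}_1\mathcal{D}_k(b_{1,j})$ and $\mathcal{D}_k\mathcal{D}_1(b_{2,j})=\mathcal{D}_k(b_{3,j})=\mathcal{D}_1\mathcal{D}_k(b_{2,j})$. On $b_{3,j}$, using $b_{3,j}=\mathcal{D}_1^2(b_{1,j})$ and $\mathcal{D}_k(b_{3,j})=\mathcal{D}_1^2\mathcal{D}_k(b_{1,j})$, the required identity becomes $\mathcal{D}_k\mathcal{D}_1^3(b_{1,j})=\mathcal{D}_1^3\mathcal{D}_k(b_{1,j})$; one expands the left side by substituting \eqref{D2} for the undifferentiated $\mathcal{D}_1^3(b_{1,j})=\mathcal{D}_1(b_{3,j})$ (whose value is the cubic on the right of \eqref{D1}) and applying $\mathcal{D}_k$ by the Leibniz rule via \eqref{D2} and \eqref{D4}, expands the right side by applying $\mathcal{D}_1$ three times via \eqref{D1} to the expression \eqref{D2} for $\mathcal{D}_k(b_{1,j})$, and checks that the two sides coincide after collecting terms of equal degree in the $b$'s.

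Once $[\mathcal{D}_k,\mathcal{D}_1]=0$ is known for all odd $k$, the general case reduces to checking $[\mathcal{D}_k,\mathcal{D}_l]=0$ on the coordinates $b_{1,j}$ only. Indeed, by \eqref{D3} and the already-proved commutation with $\mathcal{D}_1$,
\begin{align*}
\mathcal{D}_k\mathcal{D}_l(b_{2,j}) &= \mathcal{D}_k\mathcal{D}_1\mathcal{D}_l(b_{1,j}) = \mathcal{D}_1\mathcal{D}_k\mathcal{D}_l(b_{1,j}), &
\mathcal{D}_l\mathcal{D}_k(b_{2,j}) &= \mathcal{D}_1\mathcal{D}_l\mathcal{D}_k(b_{1,j}),
\end{align*}
so that $[\mathcal{D}_k,\mathcal{D}_l](b_{2,j})=\mathcal{D}_1\bigl([\mathcal{D}_k,\mathcal{D}_l](b_{1,j})\bigr)$, and applying $\mathcal{D}_1$ once more gives $[\mathcal{D}_k,\mathcal{D}_l](b_{3,j})=\mathcal{D}_1^2\bigl([\mathcal{D}_k,\mathcal{D}_l](b_{1,j})\bigr)$; hence $[\mathcal{D}_k,\mathcal{D}_l](b_{1,j})=0$ for all odd $j$ forces the identities on $b_{2,j}$ and $b_{3,j}$. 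To prove $[\mathcal{D}_k,\mathcal{D}_l](b_{1,j})=0$ I would substitute \eqref{D2} for $\mathcal{D}_l(b_{1,j})$ into $\mathcal{D}_k$, expand by the Leibniz rule using \eqref{D2} on the $b_1$-factors and \eqref{D4} on the $b_2$-factors, perform the same computation with $k$ and $l$ interchanged, subtract, and verify that the difference vanishes degree by degree in the $b$'s (degrees $1$, $2$, and $3$).

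The main obstacle is the index bookkeeping in this last double expansion (and, in a milder form, in the $b_{3,j}$ computation of the case $l=1$). The truncated sums $\sum_{s=1}^{(k-1)/2}$ and $\sum_{s=1}^{(l-1)/2}$, after one further differentiation and the antisymmetrization in $k\leftrightarrow l$, generate several families of quadratic and cubic terms whose summation ranges and index shifts (by $k-1$, $l-1$, and $k+l-2$) differ; one must reindex one summation variable in each family so that the terms cancel in matching pairs, and one must check that the boundary terms of the truncated sums together with the leading term $b_{2,j+k-1}$ (resp. $b_{2,j+l-1}$) of \eqref{D2} are exactly absorbed by interior contributions of the other sum. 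No conceptual difficulty beyond this careful accounting is expected: since $\mathbb{C}^\infty=\lim_{\rightarrow}\mathbb{C}^{3g}$ and every identity above involves only finitely many coordinates, all the manipulations take place in a fixed $\mathbb{C}^{3g}$ and are legitimate.
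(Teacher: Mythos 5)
Your proposal is correct and follows essentially the same route as the paper: first establish $[\mathcal{D}_1,\mathcal{D}_k]=0$ (immediate on $b_{1,j}$ and $b_{2,j}$ from \eqref{D1} and \eqref{D3}, a direct expansion on $b_{3,j}$), then use it to reduce the general case to the single identity $[\mathcal{D}_k,\mathcal{D}_l](b_{1,j})=0$, which is verified by expanding via \eqref{D2} and \eqref{D4} and checking symmetry in $k$ and $l$ term by term. The index bookkeeping you defer is exactly the content of the paper's displayed computations (including the boundary simplifications $\mathcal{D}_k(b_{1,1})=b_{2,k}$ and $\mathcal{D}_k(b_{2,1})=b_{3,k}$), so no new idea is missing.
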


\begin{proof}

First let us prove that
\begin{equation} \label{D6}
[\mathcal{D}_1,\mathcal{D}_k] = 0
\end{equation}
for every odd natural number $k$.
By formulas \eqref{D1} and \eqref{D3} we have
\begin{align*}
\mathcal{D}_k(\mathcal{D}_1(b_{1,j})) &= \mathcal{D}_1(\mathcal{D}_k(b_{1,j})), & \mathcal{D}_k(\mathcal{D}_1(b_{2,j})) &= \mathcal{D}_1(\mathcal{D}_k(b_{2,j})).
\end{align*}
Note that for $j = 1$ from \eqref{D2} we have $\mathcal{D}_k(b_{1, 1}) =
b_{2, k}$ because
\[
 \sum_{s=1}^{(k-1)/2} b_{2, 2s-1} b_{1, k-2s} = \sum_{s=1}^{(k-1)/2}  b_{1, 2s-1} b_{2, k-2s}.
\]
From \eqref{D4} we have $\mathcal{D}_k(b_{2, 1}) =
b_{3, k}$ because
\[
 \sum_{s=1}^{(k-1)/2} b_{3,2s-1} b_{1, k-2s} = \sum_{s=1}^{(k-1)/2} b_{1,2s-1} b_{3,k-2s}.
\]
Therefore from the expressions \eqref{D1}, \eqref{D2} and \eqref{D4} we get
\begin{multline}
{1 \over 4} \mathcal{D}_k(\mathcal{D}_1(b_{3,j})) = \mathcal{D}_k(2 b_{1,1} b_{2,j} + b_{2,1} b_{1,j} + b_{2,j+2}) = \\
= 2 b_{2,k} b_{2,j} + 2 b_{1,1} \mathcal{D}_k(b_{2,j}) +  b_{3,k} b_{1,j} + b_{2,1} \mathcal{D}_k(b_{1,j}) +  \mathcal{D}_k(b_{2,j+2}) = \\
= 2 b_{2,k} b_{2,j} + 2 b_{1,1} b_{3,j+k-1} + 2 b_{1,1} \sum_{s=1}^{(k-1)/2} b_{3,2s-1} b_{1, j+k-2s-1} - 2 b_{1,1} \sum_{s=1}^{(k-1)/2} b_{1,2s-1} b_{3,j+k-2s-1} + \\
+  b_{3,k} b_{1,j} + b_{2,1} b_{2,j+k-1} + b_{2,1} \sum_{s=1}^{(k-1)/2} b_{2,2s-1} b_{1, j+k-2s-1} - b_{2,1} \sum_{s=1}^{(k-1)/2} b_{1,2s-1} b_{2,j+k-2s-1} + \\ + b_{3,j+k+1} + \sum_{s=1}^{(k-1)/2} b_{3,2s-1} b_{1, j+k-2s+1} - \sum_{s=1}^{(k-1)/2} b_{1,2s-1} b_{3,j+k-2s+1}. \label{D7}
\end{multline}

From equations \eqref{D1} and \eqref{D5} we get
\begin{multline}
{1 \over 4} \mathcal{D}_1(\mathcal{D}_k(b_{3,j})) =
 2 b_{1,1} b_{3,j+k-1} + 3 b_{2,1} b_{2,j+k-1} + b_{3,1} b_{1,j+k-1} + b_{3,j+k+1} + \\ + \sum_{s=1}^{(k-1)/2} 2 b_{2,2s+1} b_{2,j+k-2s-1} - \sum_{s=1}^{(k-1)/2} 2 b_{2,2s-1} b_{2,j+k-2s+1} + \\
 + 2 b_{1,1} \sum_{s=1}^{(k-1)/2} \left( b_{3,2s-1} b_{1, j+k-2s-1} -  b_{1,2s-1} b_{3,j+k-2s-1} \right) + \\
 + b_{2,1} \sum_{s=1}^{(k-1)/2} \left( b_{2,2s-1} b_{1, j+k-2s-1} - b_{1,2s-1} b_{2,j+k-2s-1} \right) + \\ + \sum_{s=1}^{(k-1)/2} b_{3,2s+1} b_{1, j+k-2s-1} - \sum_{s=1}^{(k-1)/2} b_{1,2s-1} b_{3,j+k-2s+1}. \label{D8}
\end{multline}
Eq. \eqref{D8} coincides with eq. \eqref{D7} because
\begin{multline*}
3 b_{2,1} b_{2,j+k-1} + \sum_{s=1}^{(k-1)/2} 2 b_{2,2s+1} b_{2,j+k-2s-1} - \sum_{s=1}^{(k-1)/2} 2 b_{2,2s-1} b_{2,j+k-2s+1} = \\ = 2 b_{2,k} b_{2,j} + b_{2,1} b_{2, j+k-1}
\end{multline*}
and
\begin{multline*}
b_{3,1} b_{1,j+k-1} + \sum_{s=1}^{(k-1)/2} b_{3,2s+1} b_{1, j+k-2s-1} = \sum_{s=1}^{(k-1)/2} b_{3,2s-1} b_{1, j+k-2s+1} + b_{3,k} b_{1,j}.
\end{multline*}
Therefore equation \eqref{D6} is proved
for every odd natural number $k$.

Now let us use equation \eqref{D6} to prove the theorem. It is enough to prove the relation
\begin{equation} \label{D9}
\mathcal{D}_k(\mathcal{D}_l(b_{1,j})) = \mathcal{D}_l(\mathcal{D}_k(b_{1,j})).
\end{equation}

By \eqref{D2} we have
\begin{multline}
\mathcal{D}_l(\mathcal{D}_k(b_{1,j})) = \mathcal{D}_l(b_{2,j+k-1}) + \\ + \sum_{s=1}^{(k-1)/2} \mathcal{D}_l(b_{2,2s-1}) b_{1, j+k-2s-1} + \sum_{s=1}^{(k-1)/2} b_{2,2s-1} \mathcal{D}_l(b_{1, j+k-2s-1}) - \\ - \sum_{s=1}^{(k-1)/2} \mathcal{D}_l(b_{1,2s-1}) b_{2,j+k-2s-1} - \sum_{s=1}^{(k-1)/2} b_{1,2s-1} \mathcal{D}_l(b_{2,j+k-2s-1}).
\label{D10}
\end{multline}

By \eqref{D2} and \eqref{D4}, the expression \eqref{D10} is equal to the following expression
\begin{multline*}
b_{3,j+k+l-2} + \sum_{s=1}^{(l-1)/2} \left(b_{3,2s-1} b_{1, j+k+l-2s-2} - b_{1,2s-1} b_{3,j+k+l-2s-2}\right) + \\
+ \sum_{s=1}^{(k-1)/2}  b_{1, j+k-2s-1} \sum_{n=1}^{(l-1)/2} \left(b_{3,2n-1} b_{1, 2s+l-2n-2} - b_{1,2n-1} b_{3,2s+l-2n-2}\right)
+ 
\\
+ \sum_{s=1}^{(k-1)/2}  b_{1, j+k-2s-1} b_{3,2s+l-2} + \sum_{s=1}^{(k-1)/2} b_{2,2s-1} b_{2,j+k+l-2s-2} +\\ + \sum_{s=1}^{(k-1)/2} b_{2,2s-1} \sum_{n=1}^{(l-1)/2} \left(b_{2,2n-1} b_{1, j+k+l-2s-2n-2} - b_{1,2n-1} b_{2,j+k+l-2s-2n-2}\right) -
\\ 
- \sum_{s=1}^{(k-1)/2}  b_{2,j+k-2s-1} \sum_{n=1}^{(l-1)/2} \left(b_{2,2n-1} b_{1, 2s+l-2n-2} - b_{1,2n-1} b_{2,2s+l-2n-2}\right) - \\ - \sum_{s=1}^{(k-1)/2}  b_{2,j+k-2s-1} b_{2,2s+l-2}
- \sum_{s=1}^{(k-1)/2} b_{1,2s-1}  b_{3,j+k+l-2s-2} - \\ - \sum_{s=1}^{(k-1)/2} b_{1,2s-1} \sum_{n=1}^{(l-1)/2} \left(b_{3,2n-1} b_{1, j+k+l-2s-2n-2} - b_{1,2n-1} b_{3,j+k+l-2s-2n-2}\right). 
\end{multline*}
This relation is symmetric with respest to $k$ and $l$, because the following relations hold
\begin{multline*}
\sum_{s=1}^{(l-1)/2} b_{3,2s-1} b_{1, j+k+l-2s-2} 
+ \sum_{s=1}^{(k-1)/2}  b_{1, j+k-2s-1} b_{3,2s+l-2} = \sum_{s=1}^{(k+l-2)/2} b_{3,2s-1} b_{1, j+k+l-2s-2},
\end{multline*}
\begin{multline*}
\sum_{s=1}^{(k-1)/2} b_{2,2s-1} b_{2,j+k+l-2s-2} - \sum_{s=1}^{(k-1)/2}  b_{2,j+k-2s-1} b_{2,2s+l-2}  = \\ = \sum_{s=1}^{(l-1)/2} b_{2,2s-1} b_{2,j+k+l-2s-2} - \sum_{s=1}^{(l-1)/2}  b_{2,j+l-2s-1} b_{2,2s+k-2},
\end{multline*}
and the following expressions are symmetric with respect to $k$ and $l$:
\begin{multline}
\sum_{s=1}^{(k-1)/2}  b_{1, j+k-2s-1} \sum_{n=1}^{(l-1)/2} b_{3,2n-1} b_{1, 2s+l-2n-2} - \sum_{s=1}^{(k-1)/2}  b_{1, j+k-2s-1} \sum_{n=1}^{(l-1)/2} b_{1,2n-1} b_{3,2s+l-2n-2} - \\ - \sum_{s=1}^{(k-1)/2} b_{1,2s-1} \sum_{n=1}^{(l-1)/2} b_{3,2n-1} b_{1, j+k+l-2s-2n-2},
 \label{D11}
\end{multline}
\begin{multline}
\sum_{s=1}^{(k-1)/2}  b_{2,j+k-2s-1} \sum_{n=1}^{(l-1)/2} b_{1,2n-1} b_{2,2s+l-2n-2} - \sum_{s=1}^{(k-1)/2}  b_{2,j+k-2s-1} \sum_{n=1}^{(l-1)/2} b_{2,2n-1} b_{1, 2s+l-2n-2} -
\\  - \sum_{s=1}^{(k-1)/2} b_{2,2s-1} \sum_{n=1}^{(l-1)/2}  b_{1,2n-1} b_{2,j+k+l-2s-2n-2}. \label{D12}
\end{multline}
We see that the expressions \eqref{D11} and \eqref{D12} are related by the transform $b_{1,j} \mapsto b_{2,j}$, $b_{3,j} \mapsto b_{1,j}$, so it is sufficient to prove that one of these expressions is symmetric with respect to $k$ and $l$ to obtain the symmetry for the other one.
Let us show that \eqref{D11} is symmetric with respect to $k$ and $l$. The expression \eqref{D11} equals the following expression:
\begin{multline*}
\sum_{n=1}^{(l-1)/2} b_{3,2n-1} \sum_{s=1}^{(k-1)/2}  b_{1, j+k-2s-1} b_{1, 2s+l-2n-2} - \sum_{n=1}^{(l-1)/2} b_{3,2n-1} \sum_{s=1}^{(k-1)/2} b_{1,2s-1} b_{1, j+k+l-2s-2n-2} - \\ - \sum_{n=1}^{(l+k-4)/2} b_{3,2n-1} \sum_{s=1}^{(k-1)/2}  b_{1, j+k-2s-1} \sum_{p=1}^{(l-1)/2} b_{1,2p-1} \delta(2p-1, 2s+l-2n-2),
\end{multline*}
where $\delta(\cdot,\cdot)$ is the Kronecker delta function. Therefore the coefficient at $b_{3,2n-1}$ in \eqref{D11} for $1 \leqslant 2n-1 \leqslant min(k,l)-2$ is 
\begin{multline*}
\sum_{s=n+1}^{(k-1)/2}  b_{1, j+k-2s-1} b_{1, 2s+l-2n-2} -  \sum_{s=1}^{(k-1)/2} b_{1,2s-1} b_{1, j+k+l-2s-2n-2} = 
\\
= \sum_{s=(l+1)/2}^{(k+l)/2-n-1} b_{1, 2s-1} b_{1, j+k+l-2s-2n-2} -  \sum_{s=1}^{(k-1)/2} b_{1,2s-1} b_{1, j+k+l-2s-2n-2}.
\end{multline*}

The coefficient at $b_{3,2n-1}$ in \eqref{D11} for $l < k$ and $l \leqslant 2n-1 \leqslant k - 2$ is 
\[
-  \sum_{s=(2n-l+3)/2}^{n}  b_{1, j+k-2s-1} b_{1,2s+l-2n-2} = - \sum_{s=1}^{(l-1)/2} b_{1,2s-1} b_{1, j+k+l-2s-2n-2},
\]
while the coefficient at $b_{3,2n-1}$ in \eqref{D11} for $k < l$ and $k \leqslant 2n-1 \leqslant l - 2$ is 
\[
 -  \sum_{s=1}^{(k-1)/2} b_{1,2s-1} b_{1, j+k+l-2s-2n-2}.
\]

The coefficient at $b_{3,2n-1}$ in \eqref{D11} for $max(k,l) \leqslant 2n-1 \leqslant l + k - 5$ is 
\[
- \sum_{s=(2 n - l + 3)/2}^{(k-1)/2}  b_{1, j+k-2s-1} b_{1,2s+l-2n-2} = - \sum_{s=1}^{(k+l)/2 - n - 1}  b_{1, j+2s-2} b_{1,k+l-2s-2n-1}.
\]

These expressions are symmetric with respect to $k$ and $l$, therefore we have proved the relation \eqref{D9} and the Theorem.

\end{proof}

\vfill
\eject

\section{The operators $\mathcal{D}_{k}$ annulate  the polynomials $\lambda_{2j+2}$} \label{S2}

\begin{thm} \label{invthm}
For odd natural numbers $k$ and $j \in \mathbb{N}$ we have
\begin{align*}
\mathcal{D}_k(\lambda_{2j+2}) = 0,
\end{align*}
where $\lambda_{2j+2}$ is the polynomial in the coordinates $b$ defined by the relation \eqref{L1}.
\end{thm}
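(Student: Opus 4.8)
The plan is to pass to generating functions in the free parameter $\xi$. Since $\xi$ does not involve the coordinates $b$, applying $\mathcal{D}_k$ to $\mathbf{m}(\xi) = \xi^{-1} + \sum_{j} \lambda_{2j+2}\xi^j$ gives $\mathcal{D}_k(\mathbf{m}(\xi)) = \sum_j \mathcal{D}_k(\lambda_{2j+2})\xi^j$, so by \eqref{L1} the theorem is equivalent to the single identity
\[
\mathcal{D}_k\bigl(\mathbf{b}_2(\xi)^2 + 2\mathbf{b}_3(\xi)(1 - \mathbf{b}_1(\xi)) + 4(\xi^{-1}+2b_{1,1})(1-\mathbf{b}_1(\xi))^2\bigr) = 0 ,
\]
where $\mathcal{D}_k$ acts as a derivation of the polynomial algebra.

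First I would record generating-function forms of the operators. Write $r = (k-1)/2$ and $\mathbf{b}_i^{[r]}(\xi) = \sum_{m=1}^{r} b_{i,2m-1}\xi^m$ for the truncation. Collecting powers of $\xi$ in \eqref{D2} and re-indexing, the two sums there produce convolution contributions $\sum_{s+t\le r}\xi^{s+t}b_{2,2s-1}b_{1,2t-1}$ that are equal and cancel, leaving
\[
\mathcal{D}_k(\mathbf{b}_1(\xi)) = \xi^{-r}\bigl(\mathbf{b}_2(\xi)(1 - \mathbf{b}_1^{[r]}(\xi)) - \mathbf{b}_2^{[r]}(\xi)(1-\mathbf{b}_1(\xi))\bigr);
\]
since \eqref{D4} has the same shape with $b_2$ replaced by $b_3$, the same computation gives $\mathcal{D}_k(\mathbf{b}_2(\xi)) = \xi^{-r}\bigl(\mathbf{b}_3(\xi)(1 - \mathbf{b}_1^{[r]}(\xi)) - \mathbf{b}_3^{[r]}(\xi)(1-\mathbf{b}_1(\xi))\bigr)$. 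For the third series I would avoid \eqref{D5} and instead use $\mathcal{D}_k(\mathbf{b}_3) = \mathcal{D}_1(\mathcal{D}_k(\mathbf{b}_2))$ from \eqref{D3}, together with the $\mathcal{D}_1$-formulas coming from \eqref{D1} by index bookkeeping,
\[
\mathcal{D}_1(\mathbf{b}_1) = \mathbf{b}_2,\qquad \mathcal{D}_1(\mathbf{b}_2) = \mathbf{b}_3,\qquad \mathcal{D}_1(\mathbf{b}_3) = 4\bigl((\xi^{-1}+2b_{1,1})\mathbf{b}_2 + b_{2,1}(\mathbf{b}_1-1)\bigr),
\]
and their truncated analogues $\mathcal{D}_1(\mathbf{b}_1^{[r]}) = \mathbf{b}_2^{[r]}$, $\mathcal{D}_1(\mathbf{b}_2^{[r]}) = \mathbf{b}_3^{[r]}$, and, using $\xi^{-1}\mathbf{b}_2^{[r]}(\xi) = b_{2,1} + \sum_{m=1}^{r-1} b_{2,2m+1}\xi^m$,
\[
\mathcal{D}_1(\mathbf{b}_3^{[r]}) = 4\bigl((\xi^{-1}+2b_{1,1})\mathbf{b}_2^{[r]} + b_{2,1}(\mathbf{b}_1^{[r]}-1)\bigr) + 4\,b_{2,2r+1}\,\xi^{r},
\]
where the boundary term $4 b_{2,2r+1}\xi^{r}$ is produced by the shift $b_{2,j}\mapsto b_{2,j+2}$ in \eqref{D1}. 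I also need $\mathcal{D}_k(b_{1,1}) = b_{2,2r+1}$ — the computation already carried out in the proof of Theorem \ref{comthm} — hence $\mathcal{D}_k(\xi^{-1}+2b_{1,1}) = 2 b_{2,2r+1}$.

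Then I would apply $\mathcal{D}_k$ to the right-hand side of \eqref{L1} and substitute. Abbreviating $P = 1-\mathbf{b}_1(\xi)$, $Q = \mathbf{b}_2(\xi)$, $R = \mathbf{b}_3(\xi)$ and $\tilde P_r = 1-\mathbf{b}_1^{[r]}(\xi)$, $\tilde Q_r = \mathbf{b}_2^{[r]}(\xi)$, $\tilde R_r = \mathbf{b}_3^{[r]}(\xi)$, the four contributions $2Q\,\mathcal{D}_k(Q)$, $2P\,\mathcal{D}_k(R)$, $2R\,\mathcal{D}_k(P)$ and $8(\xi^{-1}+2b_{1,1})P\,\mathcal{D}_k(P)$, multiplied by $\xi^{r}$, combine so that every monomial containing $\tilde P_r$, $\tilde Q_r$ or $\tilde R_r$ cancels in pairs; what survives is $P^2$ times
\[
-2\,\mathcal{D}_1(\tilde R_r) + 8(\xi^{-1}+2b_{1,1})\tilde Q_r - 8\,b_{2,1}\,\tilde P_r + 8\,b_{2,2r+1}\,\xi^{r},
\]
and this bracket is zero by the formula for $\mathcal{D}_1(\mathbf{b}_3^{[r]})$ above, the boundary term $4 b_{2,2r+1}\xi^{r}$ there being exactly matched by the contribution of $\mathcal{D}_k(\xi^{-1}+2b_{1,1}) = 2b_{2,2r+1}$. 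Hence $\mathcal{D}_k(\mathbf{m}(\xi)) = 0$, which is the theorem. The main obstacle is obtaining the closed generating-function forms for $\mathcal{D}_k$ on $\mathbf{b}_1$ and $\mathbf{b}_2$ with the truncations $\mathbf{b}_i^{[r]}$ and boundary terms handled correctly, and then organizing the substitution into \eqref{L1} so that the large cancellation is transparent; once that bookkeeping is in place, all that remains is the one-line identity for $\mathcal{D}_1(\mathbf{b}_3^{[r]})$.
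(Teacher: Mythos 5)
Your proposal is correct and follows essentially the same route as the paper: pass to the generating functions $\mathbf{b}_i(\xi)$ and $\mathbf{m}(\xi)$, derive closed forms for $\mathcal{D}_k(\mathbf{b}_1)$ and $\mathcal{D}_k(\mathbf{b}_2)$ from \eqref{D2} and \eqref{D4} via the tail/truncation identity (your factored expressions $\xi^{-r}(\mathbf{b}_2\tilde P_r-\tilde Q_r(1-\mathbf{b}_1))$ etc.\ agree exactly with the paper's formulas), obtain $\mathcal{D}_k(\mathbf{b}_3)$ by applying $\mathcal{D}_1$, and substitute into \eqref{L1} to see the total cancellation, with $\mathcal{D}_k(b_{1,1})=b_{2,k}$ supplying the term that matches the boundary term $4b_{2,2r+1}\xi^r$ in $\mathcal{D}_1(\tilde R_r)$. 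The only differences are presentational (your truncation notation makes the pairwise cancellation slightly more transparent than the paper's expanded sums), so nothing further is needed.
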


\begin{proof}
Recall that
\begin{align*}
{\bf b}_i(\xi) &= \sum_{j=1}^\infty b_{i,2j-1} \xi^j & & \text{for} &  i &= 1,2,3, & & \text{and} &
{\bf m}(\xi) &= \xi^{-1} + \sum_{j=1}^\infty \lambda_{2j+2} \xi^j,
\end{align*}
therefore from \eqref{D1} we have
\begin{align}
\mathcal{D}_1({\bf b}_1(\xi)) &= {\bf b}_2(\xi), \label{U1} \\
\mathcal{D}_1({\bf b}_2(\xi)) &= {\bf b}_3(\xi), \label{U2} \\
\mathcal{D}_1({\bf b}_3(\xi)) &= 4 (2 b_{1, 1} {\bf b}_2(\xi) + b_{2, 1} {\bf b}_1(\xi) + \xi^{-1} {\bf b}_2(\xi) - b_{2,1}). \label{U3}
\end{align}
Thus from \eqref{L1}, \eqref{U1} and \eqref{U2} we have
\begin{multline*}
2 \mathcal{D}_1({\bf m}(\xi)) = {\bf b}_2(\xi) \mathcal{D}_1({\bf b}_2(\xi)) + \mathcal{D}_1({\bf b}_3(\xi)) (1 - {\bf b}_1(\xi)) - {\bf b}_3(\xi) \mathcal{D}_1({\bf b}_1(\xi))) + \\ + 4 b_{2,1} (1 - {\bf b}_1(\xi))^2 - 4 (\xi^{-1} + 2 b_{1,1}) (1 - {\bf b}_1(\xi)) \mathcal{D}_1({\bf b}_1(\xi)) = \\
= (1 - {\bf b}_1(\xi)) \left( \mathcal{D}_1({\bf b}_3(\xi)) + 4 b_{2,1} (1 - {\bf b}_1(\xi)) - 4 (\xi^{-1} + 2 b_{1,1})  {\bf b}_2(\xi) \right),
\end{multline*}
so using the relation \eqref{U3} we get $\mathcal{D}_1({\bf m}(\xi)) = 0$, therefore $\mathcal{D}_1(\lambda_{2j+2}) = 0$ for $j \in \mathbb{N}$.

From \eqref{D2} for $j \in \mathbb{N}$, $k \in \mathbb{N}$ we obtain 
\begin{align}
\mathcal{D}_{2k-1}(b_{1, 2j-1}) &=
b_{2, 2j+2k-3} + \sum_{s=1}^{k-1} b_{2, 2s-1} b_{1, 2j+2k-2s-3} - \sum_{s=1}^{k-1}  b_{1, 2s-1} b_{2, 2j+2k-2s-3}.
\end{align}
We will use the relations
\begin{align*}
{\bf b}_i(\xi) &= \sum_{p=1}^{k-1} b_{i,2p-1} \xi^p + \sum_{j=1}^\infty b_{i,2j+2k-3} \xi^{j+k-1} & & \text{for} &  i &= 1,2,3,
\end{align*}
therefore
\begin{align*}
\sum_{j=1}^\infty b_{i,2j+2k-3} \xi^{j} &= {\bf b}_i(\xi) \xi^{-k+1} - \sum_{p=1}^{k-1} b_{i,2p-1} \xi^{p-k+1}  & & \text{for} &  i &= 1,2,3.
\end{align*}
Note that
\[
\sum_{s=1}^{k-1} b_{2, 2s-1} \sum_{p=1}^{k-s-1} b_{1,2p-1} \xi^{p-k+s+1} =
\sum_{s=1}^{k-1}  b_{1, 2s-1} \sum_{p=1}^{k-s-1} b_{2,2p-1} \xi^{p-k+s+1}.
\]
Thus 
\begin{multline*}
\mathcal{D}_{2k-1}({\bf b}_1(\xi)) = \mathcal{D}_{2k-1}\left(
\sum_{j=1}^\infty b_{1,2j-1} \xi^j\right) = \\ =
\sum_{j=1}^\infty b_{2, 2j+2k-3} \xi^j + \sum_{s=1}^{k-1} b_{2, 2s-1} \left(\sum_{j=1}^\infty b_{1, 2j+2k-2s-3} \xi^j\right) -  \sum_{s=1}^{k-1}  b_{1, 2s-1} \left(\sum_{j=1}^\infty b_{2, 2j+2k-2s-3} \xi^j\right) = \\ =
{\bf b}_1(\xi) \sum_{s=1}^{k-1} b_{2, 2s-1} \xi^{-k+s+1} + {\bf b}_2(\xi) \left(\xi^{-k+1} -  \sum_{s=1}^{k-1}  b_{1, 2s-1} \xi^{-k+s+1}\right)
- \sum_{p=1}^{k-1} b_{2,2p-1} \xi^{p-k+1}. 
\end{multline*}
 
Using the relations \eqref{D1}, \eqref{D3}, \eqref{U1} and \eqref{U2} we get
\begin{multline*}
\mathcal{D}_{2k-1}({\bf b}_2(\xi)) =
 {\bf b}_1(\xi) \sum_{s=1}^{k-1} b_{3, 2s-1} \xi^{-k+s+1} + {\bf b}_3(\xi) \left(\xi^{-k+1} -  \sum_{s=1}^{k-1}  b_{1, 2s-1} \xi^{-k+s+1} \right)
- \sum_{p=1}^{k-1} b_{3,2p-1} \xi^{p-k+1}
. 
\end{multline*}

Using the relations \eqref{D1}, \eqref{D3}, \eqref{U1} and \eqref{U3} we get
\begin{multline*}
\mathcal{D}_{2k-1}({\bf b}_3(\xi)) =
4 \left({\bf b}_2(\xi) \left(2 b_{1, 1} + \xi^{-1}\right) - b_{2,1}\right) \left(\xi^{-k+1} - \sum_{s=1}^{k-1}  b_{1, 2s-1} \xi^{-k+s+1} \right) + \\
+ 4 {\bf b}_1(\xi) \left(b_{2, 1} \xi^{-k+1} 
+ \sum_{s=1}^{k-1} (2 b_{1, 1} b_{2, 2s-1} + b_{2, 2s+1}) \xi^{-k+s+1} \right) + 
\\ + {\bf b}_2(\xi) \sum_{s=1}^{k-1} b_{3, 2s-1} \xi^{-k+s+1} - {\bf b}_3(\xi) \sum_{s=1}^{k-1}  b_{2, 2s-1} \xi^{-k+s+1} - \\
- 4 \sum_{p=1}^{k-1} (2 b_{1, 1} b_{2, 2p-1} + b_{2, 1} b_{1, 2p-1} + b_{2, 2p+1}) \xi^{p-k+1}.
\end{multline*}

Now we use these relations as well as $\mathcal{D}_{2k-1}(b_{1,1}) = b_{2,2k-1}$ to calculate the action of~$\mathcal{D}_{2k-1}$ on the equation \eqref{L1}. We have

\begin{multline*}
2 \mathcal{D}_{2k-1}({\bf m}(\xi)) = {\bf b}_2(\xi) \mathcal{D}_{2k-1}({\bf b}_2(\xi)) + \mathcal{D}_{2k-1}({\bf b}_3(\xi)) (1 - {\bf b}_1(\xi)) - {\bf b}_3(\xi) \mathcal{D}_{2k-1}({\bf b}_1(\xi)) + \\ + 4 b_{2,2k-1} (1 - {\bf b}_1(\xi))^2 - 4 (\xi^{-1} + 2 b_{1,1}) (1 - {\bf b}_1(\xi)) \mathcal{D}_{2k-1}({\bf b}_1(\xi)). 
\end{multline*}

Note that 
\begin{multline*}
{\bf b}_2(\xi) \mathcal{D}_{2k-1}({\bf b}_2(\xi)) - {\bf b}_3(\xi) \mathcal{D}_{2k-1}({\bf b}_1(\xi)) = \\ = (1 - {\bf b}_1(\xi)) \left({\bf b}_3(\xi)
 \sum_{s=1}^{k-1} b_{2, 2s-1} \xi^{-k+s+1} - {\bf b}_2(\xi) \sum_{s=1}^{k-1} b_{3, 2s-1} \xi^{-k+s+1} \right) .
\end{multline*}

Therefore
\begin{multline*}
{2 \mathcal{D}_{2k-1}({\bf m}(\xi)) \over 1 - {\bf b}_1(\xi)} = {\bf b}_3(\xi)
 \sum_{s=1}^{k-1} b_{2, 2s-1} \xi^{-k+s+1} - {\bf b}_2(\xi) \sum_{s=1}^{k-1} b_{3, 2s-1} \xi^{-k+s+1} + \mathcal{D}_{2k-1}({\bf b}_3(\xi)) + \\ + 4 b_{2,2k-1} (1 - {\bf b}_1(\xi)) - 4 (\xi^{-1} + 2 b_{1,1}) \mathcal{D}_{2k-1}({\bf b}_1(\xi)).
\end{multline*}
 Substituting the expressions for $\mathcal{D}_{2k-1}({\bf b}_1(\xi))$ and $\mathcal{D}_{2k-1}({\bf b}_3(\xi))$ in this equation, we obtain that the right hand side is equal to zero. Thus
we get $\mathcal{D}_{2k-1}({\bf m}(\xi)) = 0$, therefore $\mathcal{D}_{2k-1}(\lambda_{2j+2}) = 0$ for $j \in \mathbb{N}$, which concludes the proof of the Theorem.

\end{proof}

\vfill
\eject


\begin{thebibliography}{99}

\bibitem{BB24} \textsc{V.\,M.\,Buchstaber, E.\,Yu.\,Bunkova}, \emph{Polynomial dynamical systems associated with the KdV hierarchy}, Part. Differ. Equ. in Appl. Math., 12 (2024).

\bibitem{BBN24}
\textsc{V.\,M.\,Buchstaber, E.\,Yu.\,Bunkova},
\emph{Formulas for Differentiating Hyperelliptic Functions with Respect to Parameters and Periods}, Proc. Steklov Inst. Math., 325 (2024), 60--73.

\bibitem{BB23}
\textsc{E.\,Yu.\,Bunkova, V.\,M.\,Buchstaber},
\emph{Explicit Formulas for Differentiation of Hyperelliptic Functions}, Math. Notes, 114:6 (2023), 1159--1170.

\bibitem{BB22}
\textsc{E.\,Yu.\,Bunkova, V.\,M.\,Buchstaber},
\emph{Parametric Korteweg--de Vries hierarchy and hyperelliptic sigma functions}, Funct. Anal. Appl., 56:3 (2022), 169--187.

\bibitem{BS}
\textsc{V.\,M.\,Buchstaber, S.\,Yu.\,Shorina}, \emph{The 
$w$-function of the KdV hierarchy}, Geometry, topology, and mathematical physics, Amer. Math. Soc. Transl. Ser. 2, 212, Amer. Math. Soc., Providence, RI, 2004, 41--66.

\end{thebibliography}
\end{document}